\documentclass[12pt]{article}

\usepackage{latexsym}
\usepackage{amsfonts}
\usepackage{amsthm}
\usepackage{amsmath}
\usepackage{amscd}
\usepackage{amssymb}
\usepackage{graphicx}
\usepackage{enumerate}
\usepackage{fancyhdr}

\newtheorem{lem}{Lemma}[section]
\newtheorem{thm}[lem]{Theorem}
\newtheorem{rem}[lem]{Remark}
\newtheorem{prop}[lem]{Proposition}

\marginparwidth=3cm \reversemarginpar

\newcommand\matF{{\mathbb{F}}}
\newcommand\matN{{\mathbb{N}}}
\newcommand\matZ{{\mathbb{Z}}}
\newcommand\matP{{\mathbb{P}}}
\newcommand\matC{{\mathbb{C}}}
\newcommand\matQ{{\mathbb{Q}}}

\newcommand\matA{{\mathbb{A}}}

\newfont{\Got}{eufm10 scaled 1200}
\newcommand{\permu}{{\hbox{\Got S}}}

\newcommand{\compo}{\,{\scriptstyle\circ}\,}
\newcommand{\Sigmatil}{\widetilde{\Sigma}}
\newcommand{\finedimo}{{\hfill\hbox{$\square$}\vspace{4pt}}}

\begin{document}

\title{On certain permutation groups\\ and sums of two squares}

\author{Pietro~\textsc{Corvaja}
\and Carlo~\textsc{Petronio}
\and Umberto~\textsc{Zannier}}

\maketitle

\begin{abstract}
\noindent
We consider the question of existence of ramified covers over $\matP_1$
matching certain prescribed ramification conditions.
This problem has already been faced in a number of papers, but we discuss alternative
approaches for an existence proof, involving elliptic curves
and universal ramified covers with signature.
We also relate the geometric problem with finite permutation groups and
with the Fermat-Euler Theorem on the representation of a prime as a sum of
two squares.
\noindent MSC (2000):  57M12 (primary);  14H37, 11A41 (secondary).
\end{abstract}

\section*{Introduction}
The present note does not contain new results. It may be considered
as an {\it addendum} to~\cite{PP}, presenting another viewpoint for
one of the results proved there. The paper~\cite{PP} is concerned
with the so-called {\it Hurwitz Existence Problem} for branched
covers between real, closed, and connected surfaces. To any such
cover one can associate certain permutations whose cycle
decompositions lengths are the branching local degrees, and the
problem asks to ``describe'' the cycle lengths coming from an actual
cover. In~\cite{PP} the authors discuss a number of situations, using an
approach based on $2$-orbifolds. More specifically, for various
infinite families of cycles lengths they establish numerical
criteria for the existence of the cover in terms of the (candidate)
total degree.

Thanks to various results obtained over the time (see in particular~\cite{EKS}
and the references quoted in~\cite{PP}), the Hurwitz Existence Problem is only
open when the target of the (candidate) cover is the sphere, that
one can view as the Riemann sphere $\matP_1(\matC)$. Moreover,
by the Riemann Existence Theorem, one knows that any topological branched cover
over the sphere can be realized as
a ramified cover of $\matP_1(\matC)$ by a complex algebraic
curve (see~\cite{GMS, V}). One of our purposes here is precisely to
present one of the conclusions of~\cite{PP} in this last perspective.

We shall focus on an example related with representations of
integers as sums of two squares. It follows directly from~\cite{PP}
that a positive integer $d$ congruent to $1$ modulo $4$ is likewise
representable if and only if there exist three permutations on $d$
letters satisfying certain simple conditions (see
Proposition~\ref{permu:prop} below). In turn, the complex-algebraic viewpoint
will show  that the existence of these permutations is related to the
endomorphisms of the elliptic curve $E$ with Weierstrass equation
$y^2=x^3-x$, namely to the complex torus $E={\matC}/{\matZ}[i]$. The
link with representations of an integer as a sum of two squares
becomes apparent, since the degree of an endomorphism of such an
elliptic curve is always the sum of two squares. This is a
well-known situation, and we also remark that the link already
appears implicitly in~\cite{S} and especially in~\cite{GMS}. (The
latter paper, among many other things, also contains explicit
constructions of algebraic covers associated to the alluded
permutations and several other similar ones.)

The second purpose of this note is to show the following
somewhat surprising fact: putting together some of
the different viewpoints on the Hurwitz Existence Problem
one can get a proof of the Fermat-Euler
theorem, which asserts that a prime $p$ congruent to $1$ modulo $4$
is a sum of two squares. Of course, such a
proof has a small interest in itself, since it relies on deep results,
whereas many elegant and simple proofs are known. However the
connection seems a striking one to us, and it raises the question
whether a direct proof exists in purely combinatorial terms related
to the said permutations.

The paper is organized as follows: in Section~\ref{new:proof:sec} we will recall one of
the results from~\cite{PP},  connect it with the Riemann Existence
Theorem, and  interpret and reprove it in terms of $\mathrm{End}(E)$. Then, in Section~\ref{main:sec},
we will deduce from this connection and interpretation a proof of the Fermat-Euler Theorem.

\section{Certain branched covers\\ of the Riemann Sphere}\label{new:proof:sec}
We start by recalling Theorem 0.4 from~\cite{PP}, that we reformulate using a
slightly different language.
To give the statement we will need the following definition.
Let $\pi: \Sigmatil\to\Sigma$ be a branched topological cover
between real, closed, and connected
surfaces. We say that $\pi$ has
{\it branching type} $(a_1,\ldots ,a_r)$
over a point $P\in\Sigma$, where $a_1,\ldots ,a_r$ are positive integers,
if $\pi^{-1}(P)$ consists of $r$ distinct points
$Q_1,\ldots ,Q_r$ such that locally at $Q_i$ the map $\pi$ may be
represented as $z\mapsto z^{a_i}$, on viewing $\Sigma$ and $\Sigmatil$
as locally homeomorphic to a complex disk.
Henceforth we will  deal with
the case $\Sigma =\matP_1(\matC)$. We have:

\begin{thm}\label{cited:thm}
\cite[Theorem 0.4]{PP}
Suppose $d=4k+1$ for some $k\in\matN$. The following conditions are equivalent:
\begin{itemize}
\item[(I)] There exists a branched cover   $\Sigmatil\to \matP_1(\matC)$ of degree $d$,
ramified over three points, with branching types
$$(1,4,\ldots,4),\qquad (1,4,\ldots ,4),\qquad(1,2,\ldots , 2);$$
\item[(II)] $d=x^2+y^2$ for some $x,y\in\matN$.
\end{itemize}
\end{thm}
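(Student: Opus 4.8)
The plan is to exhibit the whole equivalence through the degree-$4$ quotient of the elliptic curve $E=\matC/\matZ[i]$ by its order-$4$ automorphism. First note that Riemann--Hurwitz applied to a cover as in (I) gives $\chi(\Sigmatil)=2d-8k=2$, so $\Sigmatil$ is a sphere; by the Riemann Existence Theorem the datum in (I) is therefore equivalent to a degree-$d$ rational map $\matP_1(\matC)\to\matP_1(\matC)$ with the prescribed ramification, equivalently to the three permutations of Proposition~\ref{permu:prop}. The geometric model I would build everything on is the map $q\colon E\to E/\langle i\rangle\cong\matP_1(\matC)$, where $[i]$ denotes multiplication by $i$ on $E=\matC/\matZ[i]$ (the curve $y^2=x^3-x$). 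A direct fixed-point count shows that the ramification points of $q$ are exactly the $2$-torsion $E[2]=\{0,\tfrac12,\tfrac i2,\tfrac{1+i}2\}$, with $0$ and $\tfrac{1+i}2$ fixed by $[i]$ (stabilizer of order $4$) and $\{\tfrac12,\tfrac i2\}$ a single orbit (stabilizers of order $2$); hence $q$ has degree $4$ and branches over three points $A,B,C$ with types $(4),(4),(2,2)$, matching the ``shape'' of the types in (I).

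For the implication (II)$\Rightarrow$(I) I would start from $d=x^2+y^2=N(\alpha)$ with $\alpha=x+yi\in\matZ[i]$ and use that the isogeny $[\alpha]\colon E\to E$ commutes with $[i]$, so it descends to a degree-$d$ map $\psi\colon\matP_1(\matC)\to\matP_1(\matC)$ with $q\compo[\alpha]=\psi\compo q$. Since isogenies are unramified, the local degrees obey $e_\psi(q(x))\,e_q(x)=e_q([\alpha]x)$, and it remains to read off the three branching types from the $\langle i\rangle$-orbit structure of $\ker[\alpha]$ and of the relevant cosets. The key arithmetic input is that $d$ is odd, so $[\alpha]$ restricts to a bijection of $E[2]$; being $\langle i\rangle$-equivariant it fixes the two $[i]$-fixed points $0,\tfrac{1+i}2$ and preserves the orbit $\{\tfrac12,\tfrac i2\}$. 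Bookkeeping then yields over $A$ and $B$ one unramified sheet plus $k$ orbits of size $4$ (type $(1,4,\dots,4)$) and over $C$ one unramified sheet plus $2k$ orbits of size $4$ (type $(1,2,\dots,2)$), so $\psi$ realizes (I).

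For the converse (I)$\Rightarrow$(II) I would move the three branch points of $\psi\colon X\to T$ by a Möbius map to the points $A,B,C$ of $T=E/\langle i\rangle$, and then pull back the $\langle i\rangle$-cover $q$ along $\psi$, obtaining $E'\to X$ of degree $4$ and $\phi'\colon E'\to E$ of degree $d$. Riemann--Hurwitz for $E'\to X$ gives ramification $3+3+2=8$ and hence $\chi(E')=0$, so $E'$ has genus $1$ provided it is connected. Connectedness is the step that needs the full strength of the hypothesis: I would prove it on the level of monodromy by using the \emph{fixed letter} of the $(1,4,\dots,4)$ permutation $\sigma_A$, which provides an element of the product monodromy group that fixes a point of $X$ yet generates the $\matZ/4$-factor, forcing transitivity on the $4d$ sheets. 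The same totally ramified point over $A$ supplies an $\langle i\rangle$-fixed point on $E'$, so $E'$ is a genus-$1$ curve carrying an order-$4$ automorphism with a fixed point; this forces $j=1728$ and $E'\cong\matC/\matZ[i]$. Finally, after a translation $\phi'$ becomes an isogeny between curves with complex multiplication by $\matZ[i]$, hence multiplication by some $\alpha\in\matZ[i]$, and $d=\deg\phi'=N(\alpha)=x^2+y^2$.

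I expect the orbit-and-local-degree bookkeeping in (II)$\Rightarrow$(I) to be routine once the equivariance of $[\alpha]$ on $E[2]$ is pinned down. The genuine obstacle lies in (I)$\Rightarrow$(II): establishing that the pulled-back cover $E'$ is connected (so that Riemann--Hurwitz actually yields an elliptic curve rather than a possibly split curve) and then correctly identifying $E'$ as $\matC/\matZ[i]$ via its order-$4$ automorphism. The connectedness argument via the fixed point of $\sigma_A$ is exactly where the specific ramification profile in (I) is indispensable, and I would treat it as the crux of the proof.
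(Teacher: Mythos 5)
Your proposal is correct, and it reaches the theorem by a genuinely different route from the paper's. The paper, after the same reduction via the Riemann Existence Theorem, proceeds through explicit equations: the cover is normalized to the rational function $tP^4(t)/R^2(t)$, condition (I) becomes the polynomial identity~(\ref{PQR:eqn}), and for (I)$\Rightarrow$(II) the endomorphism of $E:v^2=u^4-1$ is written down by closed formulas (Proposition~\ref{PQR:give:endo:prop}: $x=uP(t)/Q(t)$, $y=vR(t)/Q^2(t)$, $t=u^4/v^2$), so no connectedness question ever arises; conversely, given an endomorphism $\varphi$ of odd degree, the paper shows $t\compo\varphi$ is invariant under an order-$8$ group and recovers $P,Q,R$ by a divisor analysis on $E$. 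You instead work throughout with the degree-$4$ quotient $q\colon E\to E/\langle i\rangle$, which is precisely the orbifold $S^2(4,4,2)$ underlying the paper's Section~\ref{main:sec} and the group $\Gamma$ of~(\ref{Gamma}), so your argument is in effect an elliptic-curve rendering of the ``signature'' approach. Your (II)$\Rightarrow$(I) bookkeeping is right: since $d$ is odd, $\ker[\alpha]$ and the coset $[\alpha]^{-1}\bigl(\tfrac{1+i}{2}\bigr)$ each contain exactly one $\langle i\rangle$-fixed point, the fiber over $C$ contains the single size-two orbit $\{\tfrac12,\tfrac i2\}$, and the local-degree relation $e_\psi(q(x))e_q(x)=e_q([\alpha]x)$ turns these orbit counts into the types $(1,4,\dots,4)$, $(1,4,\dots,4)$, $(1,2,\dots,2)$. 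Your (I)$\Rightarrow$(II) is where the real divergence lies: the pullback (normalized fiber product) forces you to prove irreducibility, and your fixed-letter argument does settle it --- the monodromy element $(\sigma_A,\tau_A)$ fixes the distinguished letter $j_0$ while $\tau_A$ generates $\matZ/4\matZ$, so the orbit of $(j_0,a)$ contains $\{j_0\}\times\matZ/4\matZ$, and transitivity of the $\sigma$'s on the $d$ sheets then gives transitivity on all $4d$ sheets; the resulting connected genus-$1$ curve with an order-$4$ automorphism fixing a point is forced to have $j=1728$, and the rest is as you say. The trade-off: the paper's explicit construction yields equations and a field of definition (as it remarks, in the spirit of~\cite{GMS}) at the price of divisor computations, while yours is coordinate-free, avoids all polynomial manipulation, and isolates exactly where the odd degree and the single unramified sheet are indispensable --- but both proofs rest on the same arithmetic kernel, item (C) of the paper, namely $\mathrm{End}(\matC/\matZ[i])=\matZ[i]$ with degree equal to the norm.
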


\begin{rem}
\emph{For a branched cover $\Sigmatil\to\matP_1(\matC)$ of degree $d=4k+1$ and ramified over three points, the
branching types $(1,4,\ldots,4)$, $(1,4,\ldots ,4)$, and $(1,2,\ldots ,2)$ force $\Sigmatil$
to be $\matP_1(\matC)$ too. In fact the types have lengths $k+1$, $k+1$, and $2k+1$, and
the Riemann-Hurwitz formula shows that if the genus of $\Sigmatil$ is $g$ then
$2(1-g)-(k+1)-(k+1)-(2k+1)=(4k+1)\cdot(2-3)$, which implies that $g=0$.}
\end{rem}

By this remark, from now on we only deal with the case $\Sigmatil=\matP_1(\matC)$.

\begin{rem}
\emph{Up to an automorphism of the target $\matP_1(\matC)$ one can suppose
without loss of generality that, if a branched
cover $\matP_1(\matC)\to\matP_1(\matC)$ has three branching points, these points are
$0$, $1$, and $\infty$. For a cover as in Theorem~\ref{cited:thm} we will always assume
that the branching types are $(1,4,\ldots,4)$ over $0$ and $1$, and $(1,2,\ldots,2)$
over $\infty$.}
\end{rem}

The proofs in~\cite{PP} employ the geometry of 2-obifolds.
For Theorem~\ref{cited:thm} they exploit in particular
the fact that $S^2(4,4,2)$, namely the sphere
with three cone points of orders $4$, $4$, and $2$, bears a Euclidean geometric structure
which is rigid up to rescaling. We will now sketch a proof of Theorem~\ref{cited:thm} in terms of branched
covers of complex algebraic curves. We first note that considering
the monodromy of a branched cover (a representation of the fundamental group
of the complement of the branching points into the symmetric group
$\permu_d$ on the $d$ letters $\{1,\ldots,d\}$), one
gets the following:

\begin{prop}\label{permu:prop}
A cover as in the statement of Theorem~\ref{cited:thm} exists if and only if
there exist permutations $\sigma_0,\sigma_1,\sigma_\infty\in\permu_d$ such that:
\begin{itemize}
\item[(i)] $\sigma_0 \sigma_1=\sigma_\infty$;
\item[(ii)] The cycles in the decompositions of $\sigma_0$ and $\sigma_1$ have lengths
$(1,4,\ldots,4)$, while those in the decomposition of
$\sigma_\infty$ have lengths $(1,2,\ldots ,2)$;
\item[(iii)] The subgroup of $\permu_d$ generated by $\sigma_0,\sigma_1,\sigma_\infty$
is transitive on $\{1,\ldots,d\}$.
\end{itemize}
\end{prop}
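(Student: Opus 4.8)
The plan is to invoke the standard monodromy correspondence between branched covers of the Riemann sphere and permutation representations of the fundamental group of the thrice-punctured sphere, which is exactly the concrete form of the Riemann Existence Theorem. By the preceding remark we may assume the branching points are $0$, $1$, $\infty$, with branching types $(1,4,\ldots,4)$ over $0$ and $1$ and $(1,2,\ldots,2)$ over $\infty$. First I would fix a basepoint $*$ in $U=\matP_1(\matC)\setminus\{0,1,\infty\}$ and choose small loops $\gamma_0,\gamma_1,\gamma_\infty$ each encircling one puncture once positively, so that $\pi_1(U,*)$ is generated by $\gamma_0,\gamma_1,\gamma_\infty$ subject to the single relation $\gamma_0\gamma_1\gamma_\infty=1$; thus $\pi_1(U,*)$ is free of rank two.

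For the direction (cover $\Rightarrow$ permutations), I would restrict a given cover $\pi:\Sigmatil\to\matP_1(\matC)$ to the preimage of $U$, obtaining an unbranched degree-$d$ covering of $U$. Labelling the $d$ points of the fibre $\pi^{-1}(*)$ by $\{1,\ldots,d\}$ produces the monodromy homomorphism $\rho:\pi_1(U,*)\to\permu_d$, and I would set $\sigma_0=\rho(\gamma_0)$, $\sigma_1=\rho(\gamma_1)$, $\sigma_\infty=\rho(\gamma_\infty)^{-1}$. The defining relation in $\pi_1(U,*)$ then yields (i). The crucial local input is the dictionary between branching type and cycle structure: near a point $Q_i$ over $P$ at which $\pi$ has the form $z\mapsto z^{a_i}$, the loop $\gamma_P$ permutes the $a_i$ sheets meeting at $Q_i$ in a single cycle, so the cycle lengths of $\rho(\gamma_P)$ are exactly the local degrees $a_1,\ldots,a_r$; this gives (ii). Finally, $\Sigmatil$ is connected if and only if the monodromy acts transitively, which is (iii).

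For the converse (permutations $\Rightarrow$ cover) I would reverse the construction. Given $\sigma_0,\sigma_1,\sigma_\infty$ satisfying (i)--(iii), I would define $\rho:\pi_1(U,*)\to\permu_d$ by $\gamma_0\mapsto\sigma_0$, $\gamma_1\mapsto\sigma_1$, $\gamma_\infty\mapsto\sigma_\infty^{-1}$; this is well defined precisely because (i) guarantees that the relation $\gamma_0\gamma_1\gamma_\infty=1$ is respected. By covering-space theory this representation yields a connected (by (iii)) unbranched cover of $U$, which I would complete over each puncture $P$ by adjoining one new point per cycle of $\sigma_P$ and gluing in a disc through the local model $z\mapsto z^{a_i}$, where $a_i$ is the length of the corresponding cycle. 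By (ii) this produces a closed connected surface $\Sigmatil$ carrying a branched cover to $\matP_1(\matC)$ with the prescribed branching types, and the Riemann Existence Theorem then endows $\Sigmatil$ with the structure of a complex algebraic curve making $\pi$ holomorphic.

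The covering-space constructions are routine; the part demanding the most care is the local-to-global bookkeeping in both directions, namely verifying that completing the unbranched cover over each puncture produces exactly the points whose local degrees equal the cycle lengths of $\sigma_P$, and that the resulting surface is closed. A secondary point that must be fixed once and for all is the orientation and composition convention that singles out the relation as $\sigma_0\sigma_1=\sigma_\infty$ rather than its inverse; this is pinned down by the chosen orientations of the loops $\gamma_P$ together with the convention for composing permutations, and it is the reason for the inversion in the assignment $\gamma_\infty\mapsto\sigma_\infty^{-1}$.
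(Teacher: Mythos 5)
Your proposal is correct and follows exactly the route the paper takes: the paper gives no detailed proof but simply invokes the monodromy correspondence for branched covers (the representation of $\pi_1(\matP_1(\matC)\setminus\{0,1,\infty\})$ into $\permu_d$, with cycle lengths matching local degrees and transitivity matching connectedness), which is precisely the argument you spell out in both directions. Your closing appeal to the Riemann Existence Theorem to make the cover algebraic is harmless but unnecessary here, since the proposition concerns only topological covers; algebraicity is the content of the paper's separate Proposition~\ref{RET:prop}.
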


This permutation viewpoint was already employed in~\cite{Hurwitz}; see also~\cite{S, V}.
We next spell out the following consequence of the Riemann Existence Theorem already anticipated
in the Introduction:

\begin{prop}\label{RET:prop}
If a topological cover of the sphere onto itself matching
certain branching types exists, it can also be realized
as a cover of algebraic curves $\pi:\matP_1\to\matP_1$,
defined over $\matC$ or even over the algebraic closure $\overline\matQ$ of $\matQ$ in $\matC$.
\end{prop}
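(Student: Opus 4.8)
The plan is to prove the two halves of the statement separately: first the realization over $\matC$, which is the classical content of the Riemann Existence Theorem, and then the descent from $\matC$ to $\overline{\matQ}$, where the substantive work lies. For the realization over $\matC$, I would start from the given topological branched cover $\pi\colon\Sigmatil\to\Sigma$ of the sphere onto itself and delete the branch points, obtaining a genuine (unramified) covering space over the complement $\Sigma\setminus\{P_1,\dots,P_n\}$. Identifying $\Sigma$ with $\matP_1(\matC)$ endows this complement with a complex-analytic structure, and I would pull this structure back along the covering map: since away from the branch points the map is a local homeomorphism, the pulled-back charts make $\Sigmatil$ minus the fibres over the $P_i$ into a Riemann surface on which $\pi$ is holomorphic.

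Next I would fill in the punctures. Over a small disk around each $P_i$ the covering is, by the hypothesis on the branching type, a disjoint union of maps modelled on $z\mapsto z^{a}$ on punctured disks; the removable-singularity (Riemann extension) theorem lets me adjoin one point to each such sheet and extend $\pi$ holomorphically, producing a compact Riemann surface $X$ together with a holomorphic branched cover $X\to\matP_1(\matC)$ realizing exactly the prescribed branching types. By the equivalence between compact Riemann surfaces and smooth projective curves over $\matC$, the surface $X$ is an algebraic curve and $\pi$ a morphism of algebraic curves; since the source $\Sigmatil$ is itself a sphere, $X=\matP_1(\matC)$ and we obtain a cover $\pi\colon\matP_1\to\matP_1$ defined over $\matC$.

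For the descent to $\overline{\matQ}$, I would first normalize the branch points to lie in $\matP_1(\matQ)$, say at $0$, $1$, and $\infty$. I would then encode a degree-$d$ cover $\matP_1\to\matP_1$ with these branch points and the prescribed ramification profiles as a rational function subject to finitely many polynomial conditions (the prescribed vanishing orders of $\pi$, of $\pi-1$, and of $1/\pi$ at the ramification points), all with coefficients in $\matQ$. The locus of such rational functions is thus a scheme of finite type over $\matQ$; the complex realization just constructed exhibits a $\matC$-point of this scheme, so the scheme is nonempty, and because $\overline{\matQ}$ is algebraically closed every nonempty $\matQ$-scheme of finite type has a $\overline{\matQ}$-point. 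This yields a cover defined over $\overline{\matQ}$.

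The main obstacle is this last step rather than the complex realization: one must package the covers with fixed branch locus and fixed ramification data as a genuine $\matQ$-scheme of finite type in order to invoke the field-of-definition argument cleanly. In the three-point case at hand this is precisely the setting of Grothendieck's theory of dessins d'enfants, where finiteness of the set of covers with given combinatorial data, together with the action of $\mathrm{Gal}(\overline{\matQ}/\matQ)$ on this finite set, yields definability over a number field, and hence over $\overline{\matQ}$, directly.
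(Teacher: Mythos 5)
Your proposal is correct and follows essentially the same route as the paper, which states this proposition without a detailed proof, as a direct consequence of the Riemann Existence Theorem (the analytic realization over $\matC$), and then remarks that the coefficients ``may actually be assumed, by specialization\dots to lie in $\overline\matQ$'' --- exactly the finite-type-scheme/closed-point argument you give, with the dessins d'enfants theory mentioned as the broader context just as in your final paragraph. Your sketch simply fills in the standard details (puncturing and completing to get the holomorphic cover, then algebraicity and the field-of-definition step) of the same argument.
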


Note that a map $\pi$ as in this proposition will be a rational
function with complex coefficients of a complex variable $t$, and
the coefficients may actually be assumed, by specialization or
operating with an automorphism of the domain, to lie in
$\overline{\matQ}$. This last fact implies that the absolute Galois
group of $\overline{\matQ}$ acts on the set of  such covers, leading
to Grothendieck's theory  of {\it dessins d'enfants}, for which the
interested reader is referred to~\cite{Sc}.

\bigskip

Let us now concentrate on branching types as in Theorem~\ref{cited:thm}.
To discuss the existence of a corresponding map $\pi$ as
in Proposition~\ref{RET:prop} we further normalize the situation noting
that, up to composition with an automorphism of the domain $\matP_1$, we
can assume without loss of generality that $0$ (respectively, $1$,
and $\infty$) is the unique unramified point
above $0$ (respectively, $1$, and $\infty$). The branching conditions then
imply that the map $\pi$, if any, can be expressed as
\begin{equation*}    
\pi(t)={tP^4(t)\over R^2(t)}=1+{(t-1)Q^4(t)\over R^2(t)}
\end{equation*}
with $\deg P(t)=\deg Q(t)=k$, and $\deg R(t)=2k$.
More precisely, a cover as in Theorem~\ref{cited:thm} exists
if and only if there exist polynomials $P(t),Q(t),R(t)\in\overline\matQ[t]$
without multiple roots such that
$tP(t)$, $(t-1)Q(t)$, and $R(t)$ are pairwise coprime,
$\deg P(t)=\deg Q(t)=k$, $\deg R(t)=2k$, and
\begin{equation}\label{PQR:eqn}
tP^4(t)=(t-1)Q^4(t)+R^2(t).
\end{equation}

\begin{rem}
\emph{When three such polynomials exist, they
provide an {\it extremal example} of the $abc$-theorem for the field of rational
functions in one variable (see, \emph{e.g.},~\cite{Z1} and ~\cite{Z2}), a known analogue
(due to Mason and Stothers) of the celebrated $abc$-conjecture of
Masser and Oesterl\'e over number fields.}
\end{rem}

\paragraph{Proof of (I)$\;\Rightarrow\;$(II) in Theorem~\ref{cited:thm}.}
Suppose the relevant branched cover exists, so there are polynomials
$P(t)$, $Q(t)$, and $R(t)$ satisfying equation~(\ref{PQR:eqn}) and the other conditions.
Dividing by $Q^4(t)$ in equation (\ref{PQR:eqn})  we obtain the $\overline{\matQ}(t)$-point
$$\left(\frac{P(t)}{Q(t)},\frac{R(t)}{Q^2(t)}\right)$$
on the genus-1 curve over $\matQ(t)$ with affine equation
$y^2=tx^4-(t-1)$. Its points over $\matQ(t)$ form a finitely generated
group, and the involved degrees correspond to the values of a
N\'eron-Tate height; this is a quadratic form, so a connection with
sums of squares begins to emerge. In this particular case the (elliptic) curve
turns out to have constant $j$-invariant (equal to $1728$), so the curve can in fact
be defined over the constant field $\matQ$, which allows one to analyze the situation in
a much simpler way than in more general circumstances.

Indeed, consider the curve which is the normalization of the closure
in $\matP_2$ of the affine curve $v^2=u^4-1$. Since it has genus
$1$, it becomes an elliptic curve $E$ if we choose, \emph{e.g.}, the
point $O:=(0,i)$ as origin (where, here and in the sequel,
$i=\sqrt{-1}$). Note that $E$ is isomorphic, as a complex torus, to
the quotient $\matC/\matZ [i]$, namely it admits an automorphism  of
order four fixing the origin, given by $(u,v)\mapsto (iu,v)$. We shall
prove the following result:

\begin{prop}\label{PQR:give:endo:prop}
Let $P(t),Q(t),R(t)$ be three polynomials
satisfying~(\ref{PQR:eqn}), with $\deg P(t)=\deg Q(t)=k,\, \deg
R(t)=2k$, such that $t\cdot (t-1)\cdot P(t)\cdot Q(t)\cdot R(t)$
has no multiple roots. Then, up to replacing the polynomial $R(t)$ by $-R(t)$,  the map $(u,v)\mapsto (x,y)$, where
$$
x=u{P(t)\over Q(t)},\quad y=v{R(t)\over Q^2(t)},\quad t={u^4\over v^2}
$$
induces an endomorphism of $E$ (as an elliptic curve) of degree $d=4k+1$.
\end{prop}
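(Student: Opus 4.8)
The plan is to check directly that the given formulas define a morphism $\phi\colon E\to E$ fixing the origin, and then to read off its degree from a commutative square of rational functions. First I would verify that the image point lies on $E$, i.e.\ that $y^2=x^4-1$. On the source we set $t=u^4/v^2$, and the curve equation $v^2=u^4-1$ yields the two identities of functions $u^4=t\,v^2$ and $v^2=1/(t-1)$. Since $x^4-y^2=\bigl(u^4P^4(t)-v^2R^2(t)\bigr)/Q^4(t)$, substituting these identities and clearing denominators reduces the desired equality $x^4-y^2=1$ to exactly relation~(\ref{PQR:eqn}). Because $E$ is a smooth projective curve mapping to a projective target, the resulting rational map extends automatically to a morphism; the squarefreeness-and-coprimality hypothesis on $t(t-1)P(t)Q(t)R(t)$ ensures in particular $Q(0)\neq0$, so the formulas are regular above $t=0$.

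Next I would show $\phi$ fixes $O=(0,i)$. There $u=0$ and $v=i$, so $t=0$ and $x=0\cdot P(0)/Q(0)=0$, while $y=i\,R(0)/Q^2(0)$. Evaluating~(\ref{PQR:eqn}) at $t=0$ gives $R^2(0)=Q^4(0)$, hence $R(0)=\pm Q^2(0)$; replacing $R$ by $-R$ if needed we secure $R(0)=Q^2(0)$, so $y(O)=i$ and $\phi(O)=O$. This is exactly where the sign ambiguity in the statement is consumed. By the rigidity lemma a morphism of elliptic curves sending origin to origin is a group homomorphism, so $\phi\in\mathrm{End}(E)$.

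For the degree I would exploit the degree-$d$ rational function $\pi(s)=sP^4(s)/R^2(s)$ of the opening display. Let $\tau\colon E\to\matP_1$ be $(u,v)\mapsto u^4/v^2$ on the source and $\tau'$ the same recipe $(x,y)\mapsto x^4/y^2$ on the target. Using $u^4=t\,v^2$ one computes $x^4/y^2=tP^4(t)/R^2(t)=\pi(\tau(u,v))$, so the square $\tau'\circ\phi=\pi\circ\tau$ commutes. A generic-fibre count for $\tau$ (combining $u^4/v^2=t_0$ with $v^2=u^4-1$ gives $u^4=t_0/(t_0-1)$, four values of $u$, each with two of $v$) yields $\deg\tau=\deg\tau'=8$, while $\deg\pi=\max(4k+1,4k)=4k+1$ since $sP(s)$ and $R(s)$ are coprime. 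Multiplicativity of degree in the square then gives $8\cdot\deg\phi=(4k+1)\cdot8$, so $\deg\phi=d$.

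I expect the main difficulty to be organizational rather than conceptual. The cleanest route avoids a point-by-point regularity analysis by invoking the extension theorem for rational maps out of a smooth curve, and avoids a direct ramification computation for $\phi$ by routing everything through the square above. The one genuinely substantive observation is that the auxiliary function $u^4/v^2$ has degree $8$ on both copies of $E$ and lies over the degree-$d$ map $\pi$, which collapses the degree statement into a single multiplicativity equation.
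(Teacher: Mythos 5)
Your proposal is correct and takes essentially the same route as the paper: the same substitution $u^4=t/(t-1)$, $v^2=1/(t-1)$ reducing $x^4-y^2=1$ to~(\ref{PQR:eqn}), and the same sign adjustment $R\mapsto -R$ to fix $O$, which you usefully make concrete by evaluating~(\ref{PQR:eqn}) at $t=0$ to get $R(0)=\pm Q^2(0)$. The only real difference is that the paper dismisses the degree computation as ``easily seen'' while you actually prove it, via the commutative square $\tau'\compo\phi=\pi\compo\tau$ with the degree-$8$ map $u^4/v^2$ and $\deg\pi=4k+1$; this is sound, and it is in fact the same structural observation ($t\compo\varphi=tP^4(t)/R^2(t)$) that the paper itself exploits later in its proof of (II)$\Rightarrow$(I).
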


\begin{proof}
From $v^2=u^4-1$ and $t=u^4/v^2$ it immediately follows that
$u^4=\frac{t}{ t-1}$ and $v^2={1\over t-1}$. Substituting in the
expression for $x$ and $y$ one obtains $x^4={t\over t-1}{P^4(t)\over
Q^4(t)}$ and $y^2={1\over t-1}{R^2(t)\over Q^4(t)}$, which shows that the equality
$x^4-1=y^2$ is equivalent to (\ref{PQR:eqn}). This proves that the
map $(u,v)\mapsto (x,y)$ indeed sends $E$ to itself. Since $x$
vanishes at $O$, the morphism sends $O$ either to itself or to the
point $(0,-i)$, in which case we replace $R(t)$ by $-R(t)$, and
then get that the morphism fixes the origin,
so it is also an endomorphism of $E$ in the sense of elliptic curves.
Its degree is easily seen to be $d=4k+1$.
\end{proof}

Now, condition (II) of the statement follows, since  the ring
$\mathrm{End}(E)$ of the endomorphisms of $E$ (as an elliptic curve)
is well-known to be isomorphic to $\matZ[i]$, with the degree given
by the square of the absolute value. More precisely, the endomorphism in
Proposition~\ref{PQR:give:endo:prop}  corresponds to the multiplication by a Gaussian integer
$a+ib$ and we have $d=a^2+b^2$, concluding the proof. \finedimo

\paragraph{Proof of (II)$\;\Rightarrow\;$(I) in Theorem~\ref{cited:thm}.}
Since for every Gaussian integer $a+ib$ there exists an endomorphism
of $E$ of degree $d=a^2+b^2$, we must show that every endomorphism
$\varphi$ of $E$ of odd degree $d$ can be obtained as above for some
polynomials $P(t)$, $Q(t)$, and $R(t)$. (Similar considerations are
valid for even degrees, leading to slightly different analogue
conclusions.) Let $\varphi\in \mathrm{End}(E)$ be an endomorphism of
degree $d=4k+1$. We have an expression of the form $\varphi(u,v)
=(x(u,v),y(u,v))$ for suitable rational functions $x,y\in \matC(E)$.
Now, the degree-$8$ map $t:E\to\matP_1$ given by $t=u^4/v^2$ is
clearly invariant under the action of the subgroup $G$ of the
automorphisms of $E$ (as algebraic curves) generated by
$$
\alpha:\, (u,v)\mapsto (iu,v),\qquad \beta:\, (u,v)\mapsto(u,-v).
$$
Note that $\alpha$ generates the isotropy group of $O$, whereas
$\beta$ may be also described as the map $p\mapsto \delta -p$ where
$\delta:=(0,-i)$; of course, $\alpha$ has order four and $\beta$ has
order two; note that $\beta$ is central in $G$,
and $\alpha^2\compo\beta: ~p\mapsto p+\delta$ is the (only) central
translation in the automorphism group of $E$. Since $G$ has order $8$, $t$
generates the field of invariants for $G$. On the other hand, it is
easy to check that $t\compo\varphi=x^4/y^2$ is invariant under the
action of $G$, therefore it is a rational function $Z(t)$ of $t$.

The function $t$ has divisor on $E$ of the shape
$4((O)+(\delta))-2((Q_1)+(Q_i)+(Q_{-1})+(Q_{-i}))$, where
$Q_l=(l,0)$. Also, the divisor of $u-i^s$ (for $s=1,\ldots,4$)  is
$2(Q_{i^s})-(\infty_+)-(\infty_-)$, that of $v+ u^2$ is
$2((\infty_+)-(\infty_-))$ for some labeling of the poles of $u,v$,
and finally that of $v-i-u^2$ is $2(\delta)-2(\infty_+)$. It
easily follows that $\delta,\infty_\pm$ have order $2$ on $E$
whereas the $Q_l$'s have order $4$.

With this information, considering zeros and multiplicities, we
see that $Z(t)=x^4/y^2=tP^4(t)/R^2(t)$ for suitable polynomials
$P(t)$ and $R(t)$, where $\deg (tP^4(t))>\deg R^2(t)$ ---here we use the fact
that $d$ is odd, so    $\varphi$ fixes $\delta$ and sends the set of poles of $t$ to itself. Similarly, we have
$x^4/y^2-1=1/y^2$, that we can rewrite as $(t-1)Q^4(t)/R^2(t)$.
Finally, the equation for $E$ shows that
$P(t)$, $Q(t)$, and $R(t)$ satisfy~(\ref{PQR:eqn}) and thus lead to a cover as in part
(I) of the statement.
\finedimo

\begin{rem}
\emph{As a byproduct of our argument we have obtained a correspondence
between permutations as in Proposition~\ref{permu:prop} and polynomials
satisfying relation~(\ref{PQR:eqn}).
Our proof actually also produces a relevant field of definition
for the coefficients, as in~\cite{GMS}.}
\end{rem}

We recall in passing that a Weierstrass model of the curve $E$ employed above is obtained
by the inverse transformations $ \eta:={u\over v-i}$, $\xi:={u^2\over
v-i}=u\eta$ and $u={\xi\over \eta}$, $v=i+{\xi\over \eta^2}$, that
lead to the equation $\eta^2={1\over 2i}(\xi^3-\xi)$.

\paragraph{Galois structure}

We conclude this paragraph with some extra considerations on the
constructions we encountered so far. First of all we prove the
following:

\begin{prop}\label{zGalois:prop}
With the above notation
(in particular $G=\langle\alpha,\beta\rangle$ is the group defined in the previous proof),
the map $t\compo\varphi=Z(t)=:z$ defines a
Galois cover $E\to\matP_1$, whose Galois group $\Gamma$ (of order
$8d$) is
\begin{equation}\label{Gamma:eqn}
\Gamma=\big\{ p\mapsto gp+\kappa:\ g\in G,\ \kappa\in\mathrm{Ker}~\varphi\big\}.
\end{equation}
\end{prop}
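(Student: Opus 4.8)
The plan is to compute the degree of $z=t\compo\varphi$, to exhibit $8d$ deck transformations of the cover $z\colon E\to\matP_1$ all having the shape prescribed in~(\ref{Gamma:eqn}), and then to invoke the general bound that a cover of smooth curves has at most $\deg z$ automorphisms, with equality exactly when it is Galois. This single comparison will yield at once that $z$ is Galois, that its Galois group has order $8d$, and that it is precisely the group $\Gamma$ of~(\ref{Gamma:eqn}). The degree is immediate: $t$ has degree $8$ and $\varphi$ has degree $d$, so $\deg z=8d$.

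The crux is to verify that every map $\psi\colon p\mapsto gp+\kappa$, with $g\in G$ and $\kappa\in\mathrm{Ker}\,\varphi$, satisfies $z\compo\psi=z$. I would establish the intertwining relation $\varphi\compo\psi=g\compo\varphi$, reading $g$, and hence $\psi$, as an affine automorphism of $E$. Three facts combine to give this: first, $\varphi(\kappa)=O$ since $\kappa\in\mathrm{Ker}\,\varphi$; second, $\varphi$ commutes with $\alpha$, because $\alpha$ is multiplication by $\pm i$ and $\mathrm{End}(E)\cong\matZ[i]$ is commutative; and third, $\varphi(\delta)=\delta$, which holds because $d$ is odd—this is exactly the fact, already used in the proof of (II)$\Rightarrow$(I), that an odd-degree endomorphism fixes the $2$-torsion point $\delta$. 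Granting $\varphi\compo\psi=g\compo\varphi$ and using that $t$ is $G$-invariant, one gets $z\compo\psi=t\compo g\compo\varphi=t\compo\varphi=z$, so each such $\psi$ is a deck transformation.

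Next I would count these maps and check that they form a group. Writing $\mathrm{Aut}(E)=E\rtimes\langle\alpha\rangle$, where the translation subgroup is $E$ and $\langle\alpha\rangle$ is the order-four stabilizer of $O$, each $\psi$ has linear part a power of $\alpha$ and translation part lying in the subgroup $\langle\delta\rangle+\mathrm{Ker}\,\varphi$ of $E$ (recall the translation $\alpha^2\compo\beta\colon p\mapsto p+\delta$ lies in $G$). Since $\delta$ has order $2$ and $\mathrm{Ker}\,\varphi$ has odd order $d$, these subgroups meet only in $O$, so $\langle\delta\rangle+\mathrm{Ker}\,\varphi$ has order $2d$ and the resulting $\psi$ are pairwise distinct, giving $4\cdot 2d=8d$ of them. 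Moreover $\langle\delta\rangle+\mathrm{Ker}\,\varphi$ is stable under $\alpha$—as $\alpha$ fixes $\delta$ and commutes with $\varphi$, hence preserves $\mathrm{Ker}\,\varphi$—so $\Gamma=(\langle\delta\rangle+\mathrm{Ker}\,\varphi)\rtimes\langle\alpha\rangle$ is genuinely closed under composition and inversion.

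Finally, since every automorphism of the cover is an element of $\mathrm{Aut}(E)$ satisfying $z\compo\psi=z$, and we have produced $8d=\deg z$ distinct such elements, the bound forces equality: $\matC(E)/\matC(t)$ is Galois and $\Gamma$ is its full Galois group, of order $8d$. The step I expect to be the main obstacle is the intertwining relation $\varphi\compo\psi=g\compo\varphi$, and within it the identity $\varphi(\delta)=\delta$; this is where the parity of $d$ is essential, and it explains the earlier parenthetical warning that even degrees lead to a different conclusion.
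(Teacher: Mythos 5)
Your proposal is correct and takes essentially the same route as the paper: the paper likewise shows that each map $p\mapsto gp+\kappa$ fixes $z$ via the commutation $g\compo\varphi=\varphi\compo g$ for $g\in G$, counts $|\Gamma|=8d$ through the extension $\{0\}\to\langle\delta\rangle\oplus\mathrm{Ker}\,\varphi\to\Gamma\to\{1,i,-1,-i\}\to\{1\}$ (your semidirect-product decomposition $(\langle\delta\rangle+\mathrm{Ker}\,\varphi)\rtimes\langle\alpha\rangle$ is the same count), and concludes by comparing with the degree $8d$ of $\overline{\matQ}(E)/\overline{\matQ}(z)$. The only difference is a proof detail: the paper establishes the key commutation with $\beta$ by an explicit computation on $\matC/\matZ[i]$ (Lemma~\ref{commut:lem}), whereas you reduce it to $\varphi(\delta)=\delta$, citing the assertion from the proof of (II)$\Rightarrow$(I) --- an equivalent fact, though one should note it holds for the specific point $\delta$ (which spans the $\varphi$-stable kernel of multiplication by $1+i$), not for arbitrary $2$-torsion points of an odd-degree endomorphism.
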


We begin with a preliminary result:

\begin{lem}\label{commut:lem}
Let $G=\langle\alpha,\beta\rangle$ be the group defined above. Let
$\varphi:E\rightarrow E$ be an isogeny of odd degree. Then for every
$g\in G$ one has $g\compo\varphi=\varphi\compo g$.
\end{lem}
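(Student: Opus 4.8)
The plan is to transport the whole statement to the complex torus $E=\matC/\matZ[i]$, where isogenies become multiplications by Gaussian integers and the elements of $G$ become affine maps. First I would record that, since $\mathrm{End}(E)\cong\matZ[i]$ with the degree equal to the norm, the isogeny $\varphi$ (which fixes $O$) is multiplication by some $w=a+bi$ with $a^2+b^2=d$ odd. Next I would describe $G$ explicitly: $\alpha$ is multiplication by the unit $i$, since it fixes $O$, while $\beta\colon p\mapsto\delta-p$. Writing out the eight products $\alpha^j\compo\beta^\epsilon$ for $0\le j\le 3$ and $\epsilon\in\{0,1\}$ (which exhaust $G$, as $|G|=8$) shows that every $g\in G$ has the affine form $g(p)=\zeta p+\tau$ with $\zeta\in\{1,i,-1,-i\}$ and translation part $\tau\in\{0,\delta\}$; in particular $\tau$ is always a $2$-torsion point.

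With this normal form the computation is immediate. Using that $\matZ[i]$ is commutative, so that the multiplicative part $\zeta$ commutes with $w$, one obtains
\[
(g\compo\varphi)(p)-(\varphi\compo g)(p)=\big(\zeta w\,p+\tau\big)-\big(w\zeta\,p+w\tau\big)=(1-w)\tau .
\]
Hence $g\compo\varphi=\varphi\compo g$ for every $g\in G$ if and only if $(1-w)\delta=0$, i.e.\ $w\delta=\delta$. This single equality is all that remains, and it is precisely where the hypothesis on the parity of the degree must enter.

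The key observation is that $\alpha$ fixes $\delta=(0,-i)$, because $\alpha(0,-i)=(0,-i)$; in the torus picture this reads $i\delta=\delta$. Consequently $w\delta=(a+bi)\delta=a\delta+b(i\delta)=(a+b)\delta$. Since $\delta$ has order two, $(a+b)\delta$ equals $\delta$ when $a+b$ is odd and $0$ when $a+b$ is even. Finally $d=a^2+b^2\equiv a+b\pmod 2$, so $d$ odd forces $a+b$ odd and therefore $w\delta=\delta$, as required.

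I expect the main obstacle to be bookkeeping rather than any deep input: one must (i) check that the translation parts of all elements of $G$ are $2$-torsion and, thanks to $i\delta=\delta$, that they reduce to the single point $\delta$, and (ii) track the congruence carefully so that oddness of $d=a^2+b^2$ is correctly converted into $a+b$ odd, which is exactly the condition making $\varphi$ fix $\delta$. This also clarifies the earlier remark that even degrees behave differently, since then $w\delta=0\neq\delta$ and $\varphi$ need not commute with $\beta$.
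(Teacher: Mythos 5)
Your proof is correct and follows essentially the same route as the paper: both arguments note that the linear parts commute automatically because $\mathrm{End}(E)\cong\matZ[i]$ is commutative, reduce everything to showing that $\varphi$ commutes with the translation by the $2$-torsion point $\delta$ (equivalently $(w-1)\delta=0$ for $\varphi$ equal to multiplication by $w=a+ib$), and clinch this with the parity observation that $d=a^2+b^2$ odd forces the needed congruence. The only cosmetic difference is that the paper lifts the computation to $\matC$ and checks $(a+ib)\tfrac{1+i}{2}-\tfrac{1+i}{2}\in\matZ[i]$, whereas you compute intrinsically on $E$ via the relation $i\delta=\delta$; the content is identical.
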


\begin{proof}
Clearly $\varphi$ commutes with $\alpha$, since both are isogenies.
To prove that $\varphi$ commutes with $\beta$, we shall prove the
equivalent fact that $\varphi$ commutes with $\alpha^2\compo\beta$,
which is the translation by $\delta$. To this end, it is useful to
think in terms of the actions of $\varphi$ and $\alpha^2\compo\beta$
on the complex plane $\matC$. The latter corresponds to the
multiplication by $\frac{1+i}{2}$, while $\varphi$ corresponds to
the multiplication by a Gaussian integer $a+ib$ with $a^2+b^2\equiv
1$ (mod $2$). Now, we have to prove that the functions $\matC\ni
z\mapsto (a+ib)z+\frac{1+i}{2}$ and $\matC\ni z\mapsto
(a+ib)(z+\frac{1+i}{2})$ coincide modulo $\matZ[i]$. This is
equivalent to the fact that
$(a+ib)\frac{1+i}{2}-\frac{1+i}{2}\in\matZ[i]$, which easily follows
from the hypothesis that $a^2+b^2$ is odd.
\end{proof}

\noindent {\it Proof of Proposition~\ref{zGalois:prop}}. The map
$z=t\compo\varphi:E\to\matP_1$  induces a field extension
$\overline{\matQ}(E)/\overline{\matQ}(z)$ of degree $8d$. Let us
prove that it is   invariant under the action of the group $\Gamma$
defined above. Let $p\in E$, $g\in G$ and
$\kappa\in\mathrm{Ker}\varphi$. Clearly,
$\varphi(gp+\kappa)=\varphi(gp)$; now, by Lemma~\ref{commut:lem}, we have
$\varphi(gp)=g(\varphi(p))$ and since $t$ is invariant by $G$ we
have $z(p)=(t\compo\varphi)(p)=z(gp+\kappa)$ as wanted. It remains to
prove that $\Gamma$ has order $8d$; this is due to the fact that
$\varphi$ has odd degree, so $\mathrm{Ker}\varphi$ has odd order.
Then the subgroup of translations in $\Gamma$ has order
$2\deg\varphi$; more precisely  $\Gamma$ is also given as the
extension
$$
\{0\}\to ~\langle\delta\rangle\oplus\mathrm{Ker}~\varphi\to\Gamma\to\{1,i,-1,-i\}\to\{1\},
$$
where the map $\Gamma\rightarrow\{1,i,-1,-i\}$ denotes the action on
the invariant differentials. From this representation, it is clear
that its order is $8d$. Hence $\Gamma$ is the Galois group of the
cover $z=t\compo\varphi: E\to\matP_1$. \finedimo

Proposition~\ref{zGalois:prop} implies
in particular that the Galois closure of the equation in $t$ over
$\overline{\matQ}(z)$ given by $Z(t)=z$ is contained in the above extension
$\overline{\matQ}(E)/\overline{\matQ}(z)$. However the Galois closure, whose Galois
group is generated by the permutations $\sigma_0,\sigma_1,\sigma_\infty$
corresponding to our cover as in Proposition~\ref{permu:prop},
is actually smaller: in
fact, as already noticed, the element $\gamma:=\alpha^2\compo\beta$ acts on $(u,v)$ as
$\gamma(u,v)=(-u,-v)$, namely as a translation by $\delta$, and hence
fixes the field $\overline{\matQ}(t)$. We have also
already remarked that $\gamma$  is in the center of $\Gamma$. Therefore the said Galois
closure is contained in the fixed field of $\gamma$, and is in fact
equal to it, because no  subgroup of $G$ larger then $\langle\gamma\rangle$ is
normal in $\Gamma$. This fixed field of $\gamma$ is easily seen to be
$\overline{\matQ}(u^2,v^2,uv)$. If we set $\sigma:=v/u^3$ and $\tau:=-1/u^2$ we
find that $\sigma$ and $\tau$ generate this field and
$\sigma^2=\tau^3-\tau$. This is a Weierstrass equation for an
elliptic curve $E^*$ (again isomorphic to $E$) which is the quotient of $E$ by the order-2
group of automorphisms generated by $\gamma$.

\begin{rem}
\emph{Equation~(\ref{Gamma:eqn}) yields an explicit representation of the group
generated by our three permutations
$\sigma_0,\sigma_1,\sigma_\infty$, which is isomorphic to
$\Gamma /\langle \gamma\rangle$, and has order $4d$.}
\end{rem}

\begin{rem}
\emph{Alternative proofs based on techniques similar to those employed here are
possible also for Theorems 0.5 and 0.6 in~\cite{PP}.}
\end{rem}

\section{Sums of two squares}\label{main:sec}
To proceed we spell out the following consequence of
the results established in Section~\ref{new:proof:sec}
(and essentially contained in Theorem~\ref{cited:thm}):

\begin{prop}\label{permu:version:prop}
Given a positive integer $d$ congruent to $1$ modulo $4$, there exist permutations
$\sigma_0,\sigma_1,\sigma_\infty\in \permu_d$ satisfying the conditions (i), (ii), and
(iii) of Proposition~\ref{permu:prop}
if and only $d$ is a sum of two squares. If they exist, $\sigma_0,\sigma_1,\sigma_\infty$
generate a group of order $4d$.
\end{prop}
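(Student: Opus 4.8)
The plan is to derive Proposition~\ref{permu:version:prop} as an immediate corollary of the machinery already assembled, rather than proving it from scratch. The first equivalence---existence of permutations $\sigma_0,\sigma_1,\sigma_\infty$ satisfying (i), (ii), (iii) if and only if $d$ is a sum of two squares---is just the composition of two facts established earlier: Proposition~\ref{permu:prop} asserts that such permutations exist precisely when a branched cover as in Theorem~\ref{cited:thm} exists, and Theorem~\ref{cited:thm} asserts that this cover exists precisely when $d=x^2+y^2$. So I would open the proof by simply chaining these two biconditionals, remarking that the remark following Theorem~\ref{cited:thm} guarantees $\Sigmatil=\matP_1(\matC)$ automatically, so no genus hypothesis is lost.

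The only genuinely new content is the order statement: when the permutations exist, they generate a group of order $4d$. For this I would invoke the Galois-theoretic analysis carried out in and around Proposition~\ref{zGalois:prop}. The key point is that the subgroup of $\permu_d$ generated by $\sigma_0,\sigma_1,\sigma_\infty$ is, by definition, the monodromy group of the cover, which is the Galois group of the Galois closure of the extension $\overline{\matQ}(t)/\overline{\matQ}(z)$ defined by $Z(t)=z$. The text preceding the final remark already identifies this Galois closure: it is the fixed field of $\gamma=\alpha^2\compo\beta$ inside $\overline{\matQ}(E)$, and the corresponding group is $\Gamma/\langle\gamma\rangle$. Since Proposition~\ref{zGalois:prop} gives $|\Gamma|=8d$ and $\gamma$ has order $2$, the quotient has order $4d$. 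I would therefore cite the remark following Proposition~\ref{zGalois:prop} directly, noting that it records exactly this isomorphism and order count.

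Concretely I would structure the argument as follows. First, fix $d=4k+1$. By Proposition~\ref{permu:prop} the permutations exist iff a cover as in Theorem~\ref{cited:thm} exists; by Theorem~\ref{cited:thm} the latter holds iff $d$ is a sum of two squares. This settles the equivalence. Second, assuming the permutations exist, I would identify the group they generate with the monodromy group of the associated cover $\pi:\matP_1(\matC)\to\matP_1(\matC)$, which coincides with $\mathrm{Gal}$ of the Galois closure of $\overline{\matQ}(t)/\overline{\matQ}(z)$. The correspondence between permutations and polynomials $P,Q,R$ produced in the proof of (II)$\Rightarrow$(I) lets me identify $z=Z(t)=t\compo\varphi$ for an endomorphism $\varphi$ of degree $d$, placing me exactly in the setting of Proposition~\ref{zGalois:prop}. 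The order $4d$ then follows from the computation $|\Gamma/\langle\gamma\rangle|=8d/2=4d$.

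The main obstacle I anticipate is not any computation but a matter of bookkeeping: verifying that the monodromy group appearing in Proposition~\ref{permu:prop} is literally the same group as $\Gamma/\langle\gamma\rangle$, independently of which endomorphism $\varphi$ realizes the given degree $d$. Different Gaussian integers $a+ib$ with $a^2+b^2=d$ could in principle give a priori different covers and hence a priori different monodromy groups; I would need to confirm that the order is nonetheless always $4d$, which is clear since $|\Gamma|=8d$ depends only on $\deg\varphi=d$ and not on $\varphi$ itself. Once this uniformity is noted, the proof is essentially a one-line invocation of the earlier results, so I would keep it short and point the reader back to Proposition~\ref{zGalois:prop} and the remark immediately following it.
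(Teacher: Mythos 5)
Your proposal is correct and follows essentially the same route as the paper: the paper itself presents this proposition as a consequence of Section~\ref{new:proof:sec}, obtained by chaining Proposition~\ref{permu:prop} with Theorem~\ref{cited:thm} for the equivalence, and by invoking the Galois closure analysis after Proposition~\ref{zGalois:prop} (the group generated by $\sigma_0,\sigma_1,\sigma_\infty$ is $\Gamma/\langle\gamma\rangle$, of order $8d/2=4d$) for the order claim. Your observation that the order count is uniform in the choice of endomorphism $\varphi$ of degree $d$ is a sensible piece of bookkeeping that the paper leaves implicit.
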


We will now specialize to the case where $d$ is a prime
number and construct the permutations explicitly.

\begin{prop}\label{prime:d:permu:prop}
Let $p$ be a prime congruent to $1$ modulo $4$. Then the group
$\matF_p^*$ has an element $\ell$ of order $4$. Consider the affine
automorphisms $L$ and $T$ of the line $\matA^1$ over $\matF_p$
defined by $L(x)=\ell x$ and $T(x)=x+1$, and the permutations
$$\sigma_0:=L,\qquad\sigma_1= T^{-1}LT,\qquad\sigma_\infty:=LT^{-1}LT$$
of the set $\matF_p=\matA^1(\matF_p)$. Then $\sigma_0,\sigma_1,\sigma_\infty$
satisfy the conditions (i), (ii), and (iii) of Proposition~\ref{permu:prop} with $d=p$.
\end{prop}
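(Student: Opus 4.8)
The plan is to verify conditions (i), (ii), (iii) of Proposition~\ref{permu:prop} directly, treating $\sigma_0,\sigma_1,\sigma_\infty$ as the affine maps of $\matF_p=\matA^1(\matF_p)$ that they are. The existence of $\ell$ is immediate: since $p\equiv 1\pmod 4$, the cyclic group $\matF_p^*$ has order $p-1$ divisible by $4$ and so contains an element of order $4$; note that then $\ell^2$ is the unique element of order $2$, i.e.\ $\ell^2=-1$. Condition (i) requires no work, because $\sigma_\infty$ is \emph{defined} as $LT^{-1}LT=\sigma_0\sigma_1$. The first substantive step is therefore to put each generator into explicit affine form. One finds $\sigma_0(x)=\ell x$, then $\sigma_1(x)=(T^{-1}LT)(x)=\ell x+(\ell-1)$, and finally $\sigma_\infty(x)=(\sigma_0\sigma_1)(x)=\ell^2 x+\ell(\ell-1)=-x-(1+\ell)$, where the last equality uses $\ell^2=-1$.

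Next I would establish (ii) via the standard description of the dynamics of an affine map $x\mapsto ax+b$ with $a\neq 1$ on $\matF_p$. Such a map has a unique fixed point $x_0=b/(1-a)$, and after conjugating by the translation carrying $x_0$ to $0$ it becomes the linear map $x\mapsto ax$; hence away from $x_0$ its cycle lengths all equal the multiplicative order of $a$, since for $x\neq 0$ one has $a^j x=x$ iff $a^j=1$. For $\sigma_0$ and $\sigma_1$ the linear part is $a=\ell$ of order $4$, so each of them consists of one fixed point (namely $0$ and $-1$) together with $(p-1)/4=k$ four-cycles, which is exactly the type $(1,4,\ldots,4)$. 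For $\sigma_\infty$ the linear part is $a=\ell^2=-1$ of order $2$, so $\sigma_\infty$ is an involution with one fixed point and $(p-1)/2=2k$ two-cycles, i.e.\ of type $(1,2,\ldots,2)$. The only point demanding mild care here is to rule out \emph{shorter} orbits: an orbit of length $2$ for $\sigma_0$ would force $\ell^2=1$, contradicting that $\ell$ has order $4$. This is precisely where the congruence $p\equiv 1\pmod 4$ enters, both to produce $\ell$ of order $4$ and to make $(p-1)/4$ and $(p-1)/2$ integers.

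For (iii) I would exhibit a single element of $\langle\sigma_0,\sigma_1\rangle$ (observe that $\sigma_\infty=\sigma_0\sigma_1$ is redundant as a generator) that already acts transitively. The natural candidate is $\sigma_1\sigma_0^{-1}$: composing $x\mapsto\ell^{-1}x$ with $x\mapsto\ell x+(\ell-1)$ yields the pure translation $x\mapsto x+(\ell-1)$. Since $\ell\neq 1$, the translation amount $\ell-1$ is nonzero, and in $\matF_p$ any nonzero translation is a single $p$-cycle; hence the subgroup is transitive, giving (iii).

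I do not expect a genuine obstacle here: every step reduces to elementary arithmetic in $\matF_p$. If anything is delicate it is pure bookkeeping, namely keeping the composition convention consistent (so that the product $\sigma_0\sigma_1$ really unfolds to $LT^{-1}LT$) and checking that no non-fixed orbit is shorter than the order of the linear coefficient. Both are dispatched by the single observation that the order of the linear coefficient governs the cycle length exactly, together with the fact that $\ell$ has order $4$ precisely because $4\mid p-1$.
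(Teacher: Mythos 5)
Your proof is correct and takes essentially the same route as the paper: condition (i) by definition, condition (ii) from the fact that an affine map of $\matF_p$ whose linear part has order $n>1$ has exactly one fixed point with all other orbits of length exactly $n$, and condition (iii) by exhibiting a nontrivial translation in $\langle\sigma_0,\sigma_1\rangle$. The only (immaterial) differences are that the paper gets the cycle type of $\sigma_1$ by conjugacy with $\sigma_0$ rather than recomputing it, and uses the commutator $[\sigma_0,\sigma_1]$ as its transitive translation where you use $\sigma_1\sigma_0^{-1}$.
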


\begin{proof}
Existence of $\ell\in\matF_p^*$ of order $4$
readily follows from the assumption $p\equiv 1\ (\mathrm{mod}\ 4)$.
Let us proceed and prove that the permutations $\sigma_0,\sigma_1,\sigma_\infty$ defined in the statement
satisfy the conditions; (i) asserts that $\sigma_\infty=\sigma_0\sigma_1$, which is
indeed true by definition.

The cycle type of $\sigma_0$ is clearly $(1,4,\ldots ,4)$, because
$\ell^2=-1$, whence $L$ and $L^2$ have $0$ as a fixed point and act
injectively on $\matF_p^*$. Since $\sigma_1$ is conjugate to
$\sigma_0$, it also has such a cycle type. Turning to
$\sigma_\infty$, and using again the fact that
$\ell^2=-1$, we see that $\sigma_\infty$ takes
the form $\sigma_\infty(x)=-x+c$, for a suitable $c\in\matF_p$
(actually $c=-\ell-1$). Therefore it is an (affine) involution of
the line $\matA^1$, and since $p\neq 2$ its cycle type is of the
form $(1,2,\ldots ,2)$, which completes the proof of condition (ii).

To establish (iii) we note that the commutator $[\sigma_0,\sigma_1]$
is a nontrivial translation, so it acts transitively on $\matF_p$.
\end{proof}

Combining Propositions~\ref{permu:version:prop} and~\ref{prime:d:permu:prop}
one readily deduces the well-known:

\begin{thm}[Fermat-Euler]\label{FE:thm}
If $p$ is a prime congruent to $1$ modulo $4$
then $p$ is a sum of two squares.
\end{thm}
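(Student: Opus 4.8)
The plan is to combine the two propositions just established into the chain of equivalences we need, tracing a prime $p\equiv 1\pmod 4$ through the permutation-theoretic characterization. First I would invoke Proposition~\ref{prime:d:permu:prop}: since $p$ is prime and congruent to $1$ modulo $4$, the multiplicative group $\matF_p^*$ (cyclic of order $p-1$, divisible by $4$) contains an element $\ell$ of order $4$, and the explicit permutations $\sigma_0=L$, $\sigma_1=T^{-1}LT$, $\sigma_\infty=LT^{-1}LT$ of $\matF_p$ satisfy conditions (i), (ii), and (iii) of Proposition~\ref{permu:prop} with $d=p$. This exhibits, unconditionally, a triple of permutations in $\permu_p$ with the required cycle types and transitivity.

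Next I would feed this existence into Proposition~\ref{permu:version:prop}, applied with $d=p$. That proposition asserts that, for any positive integer $d\equiv 1\pmod 4$, permutations satisfying (i)--(iii) exist \emph{if and only if} $d$ is a sum of two squares. Having just produced such permutations for $d=p$, the forward implication of the equivalence forces $p$ to be representable as $x^2+y^2$ with $x,y\in\matN$. This is exactly the assertion of Theorem~\ref{FE:thm}, so the proof concludes immediately.

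The genuine content, of course, sits inside the two cited propositions rather than in this final assembly, so the only thing to verify carefully here is that the hypotheses of Proposition~\ref{permu:version:prop} are met, namely that $p$ is a positive integer congruent to $1$ modulo $4$, which is given. I expect no obstacle in the splicing itself; the main subtlety worth flagging is purely logical. The argument is non-circular precisely because Proposition~\ref{prime:d:permu:prop} constructs the permutations by elementary means (the existence of a fourth root of unity in $\matF_p$ and direct computation of cycle types), without ever assuming the two-squares representation, whereas the deep implication ``permutations exist $\Rightarrow$ sum of two squares'' comes from the geometric and endomorphism-theoretic machinery of Section~\ref{new:proof:sec}. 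Thus the only work of this final proof is to observe that these two independently-established facts overlap on the single case $d=p$, yielding the classical Fermat--Euler theorem as a corollary.
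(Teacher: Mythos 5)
Your proposal is correct and matches the paper's own proof exactly: the paper deduces Theorem~\ref{FE:thm} precisely by combining Proposition~\ref{prime:d:permu:prop} (explicit construction of the permutations for $d=p$) with the forward implication of Proposition~\ref{permu:version:prop}. Your additional remark on non-circularity is a fair observation but adds nothing the paper's argument needs.
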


As mentioned in the Introduction, the resulting proof of this
classical result is extraordinarily demanding: a closer look shows
that, in addition to the construction of the
permutations in Proposition~\ref{prime:d:permu:prop}, it depends also on:

\begin{itemize}
\item[(A)] The topological construction of a finite
cover of $\matP_1(\matC)\setminus\{0,1,\infty\}$ such that lifting three simple disjoint
loops based at a point $P_0$ and encircling $0,1,\infty$ one obtains the given permutations
$\sigma_0,\sigma_1,\sigma_\infty$ on
the fiber over $P_0$. This construction appears, \emph{e.g.},
in~\cite[Theorems 4.27 and 4.31]{V}; it may be proved by patching local
covers or taking a suitable quotient of the universal cover of
$\matP_1(\matC)\setminus\{0,1,\infty\}$.
\item[(B)] The Riemann Existence Theorem, used to realize the said topological
cover as the unramified part of a ramified cover of complex
algebraic curves. This step is delicate and requires
fairly hard analysis, based either on the Dirichlet principle or on the
vanishing of suitable cohomology of holomorphic sheaves on Riemann
surfaces. (See again~\cite[Theorem 4.27]{V}.)
\item[(C)] The structure of the endomorphism ring of the elliptic curve
$E$ of Section~\ref{new:proof:sec}, namely its identification with $\matZ[i]$. This
may be established as follows. First, one notes that $\mathrm{End}(E)$ is a commutative
ring, of rank at most $2$ over $\matZ$: this may be seen by viewing $E$
as $\matC/\Lambda$, where $\Lambda$ is a lattice, and by realizing $\mathrm{End}(E)$
as a ring of multiplications by complex numbers $\mu$ such that
$\mu\Lambda\subset\Lambda$. In our case $\mathrm{End}(E)$ contains a ring
isomorphic to $\matZ[i]$, because it contains $\matZ$ and the order-4 automorphism
denoted by $\alpha$ in Section~\ref{new:proof:sec}. Hence it must be equal to
$\matZ[i]$ because it is commutative, of rank $2$ and integral over
$\matZ$. An alternative way is to note that the elliptic curve $E'$
corresponding to $\matC/\matZ[i]$ has vanishing Weierstrass invariant
$g_3$ (an easy direct computation which amounts to showing that
$\sum_{\omega\in\matZ[i]\setminus\{0\}}\omega^{-6}$ is $0$ ---see~\cite{Sil} for
careful proofs of all of these facts). Hence a Weiestrass equation
for $E'$ has the shape $y^2=4x^3+g_2x$, therefore $E'$ is isomorphic to
$E$, under a substitution $x\mapsto cx$ for suitable $c$.
\end{itemize}

In conclusion, all of these steps involve some nontrivial
mathematics, and (B) is particularly delicate.
Combining the (self-contained) proof of Proposition~\ref{prime:d:permu:prop}
with the arguments used in~\cite{PP} to establish Theorem~\ref{cited:thm},
one gets instead a proof of Theorem~\ref{FE:thm} based on item (A) above and
on the existence and rigidity (up to scaling) of a Euclidean structure on
the orbifold $S^2(4,4,2)$.

On the other hand there exist many few-lines self-contained
proofs of the Fermat-Euler result.
Nevertheless, one cannot say that the proof given above {\it contains},
from the logical viewpoint, any classical proof, as for instance the argument
based on the unique factorization of $\matZ[i]$. In fact, although
this ring plays an implicit role in item (C) above, its factorization
properties are not employed, neither explicitly nor implicitly.

\paragraph{Ramified covers with signature}
An alternative approach to Theorem~\ref{cited:thm}, to which the argument in~\cite{PP}
is closer and which does not require items (B) and (C) above, is described in
a sketchy but complete fashion in~\cite[pp. 60-63]{S}.
This avoids the viewpoint of complex algebraic curve altogether,
being based on the notion of {\it ramified cover
with signature} which, roughly speaking, consists of a
topological cover of a space deprived of finitely many points,
together with a ramified structure above the remaining points, of
the same type as a map of the shape $z\mapsto z^n$.

In our case we have a {\it universal covering with signature
$(4,4,2)$}, meaning a space $Y$ which is obtained by suitably
completing the quotient of the universal cover of $\matP_1(\matC)\setminus
\{0,1,\infty\}$ by the normal subgroup $N$ of $\pi_1(\matP_1(\matC)\setminus
\{0,1,\infty\})$ generated by $c_0^4,c_1^4,c_\infty^2$, where
$c_0,c_1,c_\infty$ are the simple disjoint loops already mentioned above. As
stated in~\cite[p. 63]{S}, one realizes $Y$
as the Euclidean plane $\matC$,
with covering group $\Gamma$ given by the rigid motions of the plane
preserving the orientation and the lattice $\matZ[i]$; we have
$\Gamma\cong \pi_1(\matP_1(\matC)\setminus \{0,1,\infty\})/N$.

An explicit description of the elements of $\Gamma$ as affine transformations of the complex line is as follows:
\begin{equation}\label{Gamma}
\Gamma=\Big\{\matC\ni z\mapsto u z+\lambda\, :\, u\in\{1,i,-1,-i\},\, \lambda\in \matZ[i]\Big\}.
\end{equation}
There are three orbits of points in $\matC$ having non trivial
stabilizers: the first one is $\matZ[i]$, where each point has a
stabilizer of order four; the second one is $\frac{1+i}{2}+\matZ[i]$,
also having a stabilizer of order four; the third one is
$\left(\frac{1}{2}+\matZ[i]\right)\cup\left(\frac{i}{2}+\matZ[i]\right)$, having a
stabilizer of order two. They correspond to the pre-images of
$0,1,\infty$.

\begin{thm}\label{universale}
 The group $\Gamma$ defined by (\ref{Gamma}) is the universal group of type $(4,4,2)$.
\end{thm}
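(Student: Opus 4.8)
\medskip
\noindent\emph{Proof proposal.} The plan is to produce an explicit isomorphism between the abstract universal group of type $(4,4,2)$, namely
$$\Delta:=\pi_1\big(\matP_1(\matC)\setminus\{0,1,\infty\}\big)/N=\big\langle c_0,c_1,c_\infty \,\big|\, c_0c_1c_\infty=1,\ c_0^4=c_1^4=c_\infty^2=1\big\rangle,$$
and the concrete group $\Gamma$ of~(\ref{Gamma}). First I would write down the three rotations about representatives of the three special orbits listed above: $a\colon z\mapsto iz$, of order $4$ fixing $0\in\matZ[i]$; $b\colon z\mapsto iz+1$, of order $4$ fixing $\tfrac{1+i}{2}$; and $c:=(a\compo b)^{-1}\colon z\mapsto -z+i$, a half-turn about $\tfrac{i}{2}$. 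All three lie in $\Gamma$, and by construction $a^4=b^4=c^2=\mathrm{id}$ and $a\compo b\compo c=\mathrm{id}$. Hence sending $c_0,c_1,c_\infty$ to $a,b,c$ respects every defining relation of $\Delta$ and yields a homomorphism $\psi\colon\Delta\to\Gamma$.

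Surjectivity of $\psi$ is then immediate: computing $b\compo a^{-1}\colon z\mapsto z+1$ exhibits the translation by $1$ in the image, and conjugating it by $a$ produces the translation by $i$; thus the image contains the full translation lattice $\matZ[i]$ together with the rotation $a$, and these generate all of $\Gamma$ by~(\ref{Gamma}).

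The heart of the argument is injectivity, i.e.\ the statement that $\Gamma$ is not a proper quotient of $\Delta$. I would organize both groups as extensions
$$1\longrightarrow \matZ[i]\longrightarrow \Gamma \longrightarrow C_4\longrightarrow 1,$$
$C_4$ being the rotation part $\{1,i,-1,-i\}$, and accordingly set $t:=c_1c_0^{-1}$ and $s:=c_0c_1c_0^{-2}$ in $\Delta$, so that $\psi(t),\psi(s)$ are the translations by $1$ and $i$ while $\psi(c_0)$ generates the quotient $C_4$. Since $\psi$ is onto and induces the identity on the $C_4$--quotients, a five--lemma chase shows that $\psi$ is an isomorphism as soon as the induced map $\ker(\Delta\to C_4)\to\matZ[i]$ is injective; equivalently, as soon as $\langle t,s\rangle\subseteq\Delta$ is free abelian of rank $2$, normal, and of index exactly $4$. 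The cleanest way to obtain this is geometric: one checks that $\Gamma$ acts properly discontinuously on $\matC$ with fundamental domain the square spanned by $0,\tfrac12,\tfrac{1+i}{2},\tfrac{i}{2}$, two of whose corners ($0$ and $\tfrac{1+i}{2}$) are $4$--fold centres and two of which ($\tfrac12$ and $\tfrac{i}{2}=a(\tfrac12)$) are identified into a single $2$--fold centre. With suitable side identifications the quotient $\matC/\Gamma$ is thus the orbifold $S^2(4,4,2)$, and $a,b,c$ are carried to its standard generators. As $\matC$ is simply connected, $\matC\to\matC/\Gamma$ is \emph{the} universal orbifold covering, whose deck group is by definition $\Delta$; hence $\Gamma\cong\Delta$ and $\psi$ is the inverse isomorphism.

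I expect the proper--discontinuity--and--quotient step to be the only genuine obstacle; all the remaining steps are finite computations. Purely algebraically the same point is the assertion that $t$ and $s$ \emph{commute} in $\Delta$ and that $\langle t,s\rangle$ is normalised by $c_0$ — word identities which must be deduced from the single relation $(c_0c_1)^2=1$, precisely the relation responsible for $\Delta$ being the Euclidean wallpaper group $p4$ rather than a larger group. Verifying $[t,s]=1$, together with the absence of any further collapsing in $\ker(\Delta\to C_4)$, is exactly what distinguishes the Euclidean signature $(4,4,2)$ and constitutes the crux of the proof.
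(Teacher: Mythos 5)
Your proposal is sound in outline, but it proves the theorem by a genuinely different route from the paper. The paper's proof is purely combinatorial and runs in the opposite direction: it takes an \emph{arbitrary} group $G$ of type $(4,4,2)$, generated by $\alpha,\beta$ with $\alpha^4=\beta^4=(\alpha\beta)^2=1$, notes that word length modulo $4$ is well defined, checks that the words of length divisible by $4$ commute and are generated by $u:=\alpha^3\beta$ and $v:=\alpha\beta^3$, and that the conjugation action of $G$ on $\langle u,v\rangle$ is forced by the relations; comparing the resulting sequence $\{0\}\to\langle u,v\rangle\to G\to\matZ/4\matZ$ with the explicit sequence $\{0\}\to\matZ[i]\to\Gamma\to\matZ/4\matZ\to\{0\}$ (where $\tilde u=c_0^3c_1$ and $\tilde v=c_0c_1^3$ are the translations by $i$ and $-1$), it concludes that every such $G$ is a quotient of $\Gamma$ --- which is exactly universality, and needs no topology at all. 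You instead fix the presentation group $\Delta$, construct the surjection $\psi:\Delta\to\Gamma$ (your $a,b,c$ are literally the paper's $c_0,c_1,c_\infty$, and your surjectivity computation matches the paper's $\tilde u,\tilde v$), and obtain injectivity geometrically: proper discontinuity, the square fundamental domain, the identification $\matC/\Gamma\cong S^2(4,4,2)$, and the principle that a simply connected developing space is \emph{the} universal covering with signature. What the paper's route buys is self-containedness (a finite word verification, in keeping with this section's goal of minimizing machinery) plus a fact that is reused in the subsequent remark on the monodromy group for prime degree; what your route buys is geometric transparency and an explicit isomorphism, at the cost of importing the general theory of coverings with signature (Serre) or of orbifold coverings (Thurston).

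One caveat on your key step: the phrase ``whose deck group is by definition $\Delta$'' is not a definition. In the paper's framework the universal group of type $(4,4,2)$ is $\pi_1(\matP_1(\matC)\setminus\{0,1,\infty\})/N$, and identifying this with the deck group of your simply connected cover $\matC\to\matC/\Gamma$ is a theorem: one must show, e.g., that the fundamental group of the complement in $\matC$ of the three special orbits is normally generated by meridians, each of which maps to a conjugate of $c_0^{\pm4}$, $c_1^{\pm4}$ or $c_\infty^{\pm2}$, so that the subgroup of $\pi_1$ corresponding to this cover is exactly $N$. This developability statement is standard but is precisely where the content lies; it is the geometric counterpart of the algebraic crux ($[t,s]=1$ and $\ker(\Delta\to C_4)=\langle t,s\rangle$) that you correctly identify but leave unverified --- and which is exactly what the paper's ``easily checked'' manipulation of $u$ and $v$ supplies. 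So your argument is acceptable as a sketch at the same level of rigor as the paper's, provided you cite the general theory honestly rather than present that identification as definitional.
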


\begin{proof}
We confine ourselves to a sketch. Let $G$ be a group of type  $(4,4,2)$, so $G$
is generated by two elements $\alpha,\beta$ with
$\alpha^4=\beta^4=(\alpha\beta)^2=1$. It is immediate from this
presentation that the congruence class modulo four of the length of a word
representing an element of $G$ only depends on that element.  Also,
it is easily checked that the words of length divisible by four
commute and can be generated by $u:=\alpha^3\beta$ and
$v:=\alpha\beta^3$. Hence we always have a group homomorphism
$G\rightarrow\matZ/4\matZ$ whose kernel is an Abelian (normal)
subgroup generated by two elements. So we have an exact sequence
$\{0\}\to\langle u,v\rangle\to G\to\matZ/4\matZ$, where $\langle
u,v\rangle \cong (\matZ/a\matZ)\times(\matZ/b\matZ)$ for integers
$a,b$  (possibly zero or one) and the last morphism (which need not
be surjective) is the reduction modulo four of the word length.
We note that the action (by conjugation) of $G$ on $\langle u,v\rangle$ is
uniquely determined by the initial relations $\alpha^4=\beta^4=(\alpha\beta)^2=1$.
Coming back to our group $\Gamma$, let us consider the three
elements $c_0,c_1,c_\infty$ of $\Gamma$ defined by  $c_0(z)=iz$,
$c_1(z)=1+iz$, $c_\infty(z)=-z+i$. Then $c_0^4=c_1^4=c_\infty^2=1$
and $c_0c_1=c_\infty$. Clearly, $\tilde{u}:=c_0^3c_1$ and $\tilde{v}:=c_0c_1^3$ act as  $\tilde{u}(z)=z+i$ and
$\tilde{v}(z)=z-1$, so they generate the subgroup of translations,
isomorphic to $\matZ[i]\cong \matZ^2$. Hence we have the exact
sequence
\begin{equation*}
 \{0\}\to\matZ^2\to\Gamma\to\matZ/4\matZ\to\{0\}.
\end{equation*}
Note that the action of $\Gamma$ on $\matZ [i]$ is compatible with the
action of $G$ on $\langle u,v\rangle$, in the natural sense: for instance we have
in $G$ the relation $\alpha u\alpha^{-1}=v^{-1}$, which corresponds in $\Gamma$ to the
relation $c_0\tilde{u}c_0^{-1}=\tilde{v}^{-1}$. From this fact it
easily follows that $G$ is a quotient of $\Gamma$.
\end{proof}

Let us get back to the setting of Theorem~\ref{cited:thm},
and let us use the universal covering with signature $(4,4,2)$, denoted
by $Y$ as above. A cover $X$ of the Riemann sphere with the relevant branching types
exists if and only if it can be realized as the quotient of $Y$ by a
subgroup $\Delta$ of $\Gamma$, of {\it odd} index $d$ in $\Gamma$. The
permutations $\sigma_0,\sigma_1,\sigma_\infty$
then correspond to the images of $c_0,c_1,c_\infty$ in
the permutation representation of $\Gamma$ on the right cosets
$\Gamma/\Delta$. One easily sees that if $\Delta$ exists then it must
contains an element $\sigma$ of order $4$, which must be a rotation
of $\pi/2$ around some point. The orbit of the origin by $\Delta$
is a lattice stable under $\sigma$, which corresponds to an ideal in
$\matZ[i]$. This ideal is principal, and we find again the
conclusion that $d$ is a sum of two squares.
As a matter of fact, to conclude one can also
avoid invoking the principal-ideal ring structure,
by observing that the said lattice
must have a basis of type $v,\sigma v$, whence its index is
necessarily a sum of two squares.

\begin{rem}
\emph{When the degree is an odd prime $p$, this approach also allows
one to elucidate the structure of the group $G$ generated by the
permutations $\sigma_0,\sigma_1,\sigma_\infty$. In fact, as stated
in the proof of Theorem~\ref{universale}, the group generated by the
words of length $4$ in $\sigma_0,\sigma_1$ is commutative. Hence $G$
has an Abelian subgroup $G_0$ of index at most $4$. Since $G$ is a
transitive subgroup of $\permu_p$ it contains a $p$-cycle $g$, which
must lie in $G_0$. Then $G_0$ must be the group generated by $g$.}
\end{rem}

\vspace{1cm}

\noindent
Dipartimento di Matematica e Informatica\\
Universit\`a di Udine\\
Via delle Scienze, 206\\
33100 UDINE --  Italy\\
pietro.corvaja@dimi.uniud.it\\

\vspace{1cm}

\noindent
Dipartimento di Matematica Applicata\\
Universit\`a di Pisa\\
Via Filippo Buonarroti, 1C\\
56127 PISA -- Italy\\
petronio@dm.unipi.it\\

\vspace{1cm}

\noindent
Scuola Normale Superiore\\
Piazza dei Cavalieri, 7\\
56126 PISA -- Italy\\
u.zannier@sns.it

\end{document}